\theoremstyle{plain}
\newtheorem{theorem}{Theorem}
\newtheorem{proposition}[theorem]{Proposition}
\newtheorem{lemma}[theorem]{Lemma}
\newtheorem{proposition.definition}[theorem]{Proposition/Definition}
\newtheorem{theoremalpha}{Theorem}
\theoremstyle{definition}
\newtheorem{remark}[theorem]{Remark}
\newtheorem{conjecture}[theorem]{Conjecture}
\newcommand{\lra}{\longrightarrow}
\newcommand{\noi}{\noindent}
\newcommand{\PP}{\mathbf{P}}
\newcommand{\CC}{\mathbf{C}}
\newcommand{\OO}{\mathcal{O}}
\newcommand{\FF}{\mathcal{F}}
\newcommand{\bull}{_{\bullet}}
\newcommand{\frakm}{\mathfrak{m}}
\newcommand{\HH}[3]{H^{{#1}} \big( {#2} , {#3}
\big) }
\newcommand{\pr}{\prime}
\newcommand{\Div}{\text{Div}}
\newcommand{\Jac}{\textnormal{Jac}}
\newcommand{\Linser}[1]{| \mspace{1.5mu} {#1}
\mspace{1.5mu} |}
\newcommand{\linser}[1]{\Linser{  {#1}  }}
\newcommand{\gon}{\textnormal{gon}}
\newcommand{\pro}{{pr}}
\newcommand{\Sym}{\textnormal{Sym}}
\newcommand{\ev}{\textnormal{ev}}
\numberwithin{equation}{section}
\numberwithin{theorem}{section}
\begin{document}

\title[Gonality Conjecture]{The gonality conjecture on  syzygies of algebraic curves of large degree}

 \author{Lawrence Ein}
  \address{Department of Mathematics, University Illinois at Chicago, 851 South Morgan St., Chicago, IL  60607}
 \email{{\tt ein@uic.edu}}
 \thanks{Research of the first author partially supported by NSF grant DMS-1001336.}

 \author{Robert Lazarsfeld}
  \address{Department of Mathematics, Stony Brook University, Stony Brook, New York 11794}
 \email{{\tt robert.lazarsfeld@stonybrook.edu}}
 \thanks{Research of the second author partially supported by NSF grant DMS-1439285.}
 
\maketitle

 \section*{Introduction}

 The purpose of this note is to show that a small variant of the methods used by Voisin   in \cite{Voisin1} and \cite{Voisin2}   leads to a surprisingly quick proof of the gonality conjecture of \cite{GL3}, asserting that one can read off the gonality of an algebraic curve $C$ from its syzygies in the embedding defined by any one line bundle of sufficiently large degree.  More generally, we establish a necessary and sufficient condition for the asymptotic vanishing of the weight one syzygies of the module associated to an arbitrary line bundle on $C$.

 Let $C$ be a smooth complex projective curve of genus $g \ge 2$,   and let $L$ be a very ample line bundle of degree $d$ on $C$ defining an embedding 
 \[  C \ \subseteq \ \PP H^0(C,L) \ = \ \PP^r.  \]
 Starting with the work of Green in \cite{Kosz1}, \cite{Kosz2} there has been a great deal of interest in  understanding connections between the geometry of $C$ and $L$ and their syzygies. More precisely, 
   write $S = \Sym\,  \textnormal{H}^0(C,L)$ for the homogeneous coordinate ring of $\PP^r$, and denote by   \[ R=  R(L)= \oplus_m H^0(C, mL)\]  the graded $S$-module associated to $L$. Consider next the minimal graded free resolution     $E\bull = E\bull(L)$ of R over $S$:
 \[
 \xymatrix{
0 \ar[r] & E_{r-1} \ar[r]& \ldots \ar[r] &    E_2 \ar[r] & E_1 \ar[r]  & E_0 \ar[r] & R \ar[r] & 0 , }\]
where $ E_p =\oplus S(-a_{p,j})$. 
Note that if $L$ is normally generated  then $E_0 = S$, in which case $E\bull$ gives rise to a minimal resolution of the homogeneous ideal $I = I_{C/\PP^r}$ of $C$ in $\PP^r$. As customary, we denote    by $K_{p,q}(C;L)$  the vector space of minimal generators of $E_p$ in degree $p+q$, so that
\[    E_p \ = \ \bigoplus_q\,   K_{p,q}(C;L) \otimes_{\CC}  S(-p-q). \]
We will be concerned here with investigating the grading of $E\bull(L)$ -- ie determining which of the $K_{p,q}$ are non-vanishing -- when $L$ has very large degree.

It is elementary that if $H^1(C,L) = 0$  then $K_{p,q}(C;L) = 0$ for $q \ge 3$. Moreover, work of Green \cite{Kosz1} and others shows that if $d = \deg(L) \gg 0$, so that in particular  $ r = d-g$, then:
\begin{align*} K_{p,0} (C;L) \, \ne \, 0 \ &\Longleftrightarrow \ p =0; \\
K_{p,2}(C;L) \, \ne \, 0 \ &\Longleftrightarrow \ r-g \, \le p \, \le r - 1. \end{align*}
It follows from this that
\[  K_{p,1}(C; L) \, \ne \, 0 \ \text{  for } 1 \, \le \, p \, \le r-1-g, \] 
 but these results leave open the question of when $K_{p,1}(C;L) \ne 0$ for $p \in [r-g, r-1]$. 
 Our first main result is that this is determined by the gonality $\gon(C)$ of $C$, ie the least degree of a branched covering $C \rightarrow \PP^1$. 
 \begin{theoremalpha} \label{Gonality.Thm}
 If $\deg(L) \gg 0$, then 
 \[  K_{p,1}(C; L) \, \ne \, 0  \ \Longleftrightarrow \ 1 \, \le \, p \, \le \, r - \gon(C).\]
 \end{theoremalpha}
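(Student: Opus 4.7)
My plan is to reduce Theorem A, via Green's Koszul duality, to a vanishing/non-vanishing question about Koszul cohomology of $K_C$ with respect to $L$, and then to attack that question by translating it into cohomology on the symmetric product $C^{(n)}$ following the general strategy introduced by Voisin. For $\deg(L) \gg 0$ we have $h^1(C, L) = 0$ and $r = d - g$, so the duality theorem for Koszul groups on curves gives
\[
K_{p,1}(C;L)^{*} \;\cong\; K_{r-1-p,\,1}(C, K_C; L),
\]
where the right-hand side is the Koszul cohomology of $\bigoplus_{m \ge 0} H^0(C, K_C \otimes L^m)$. Setting $c = \gon(C)$ and $p' = r-1-p$, Theorem A becomes the assertion
\[
K_{p',\,1}(C, K_C; L) \,\ne\, 0 \ \Longleftrightarrow\ c-1 \,\le\, p' \,\le\, r-2. \qquad (\ast)
\]

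For the non-vanishing half of ($\ast$), I would use a base-point-free pencil $|A|$ of degree $c$ on $C$: Green's base-point-free pencil trick constructs an explicit nonzero class in $K_{c-1,\,1}(C, K_C; L)$ for $\deg(L)\gg 0$, and the non-vanishing then propagates to all $p' \in [c-1, r-2]$ by standard Koszul-cohomology propagation arguments (multiplication by a general section of $L$).

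For the vanishing half---the main obstacle---I would pass to the symmetric product $C^{(n)}$ with $n = p'+1 \le c-1$, working with the tautological rank-$n$ bundle $\mathcal{E}^{(n)}_L$ on $C^{(n)}$ (whose fiber at $\xi \in C^{(n)}$ is $H^0(\xi, L|_\xi)$) together with the kernel bundle $\mathcal{M}^{(n)}_L$ of the evaluation $H^0(L)\otimes \OO_{C^{(n)}} \twoheadrightarrow \mathcal{E}^{(n)}_L$. The Koszul group $K_{p',1}(C, K_C; L)$ admits a Voisin-type description as an $H^1$ of a natural twist of $\mathcal{M}^{(n)}_L$ (or of one of its wedge powers) by a line bundle built from $K_C$. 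The gonality hypothesis $n < c$ translates into the rigidity statement $h^0(\OO_C(\xi)) = 1$ for every $\xi \in C^{(n)}$---equivalently, the Abel-Jacobi map $C^{(n)} \hookrightarrow \mathrm{Pic}^n(C)$ is an embedding---and this gives uniform control over the cohomology of the line bundles $L(-\xi)$ on $C$ as $\xi$ varies. Combined with Serre vanishing on $C^{(n)}$ (available since $\deg(L)\gg 0$), this should yield the desired vanishing.

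The principal technical obstacle is making the translation between the Koszul group and cohomology on $C^{(n)}$ explicit enough that the gonality hypothesis can be inserted cleanly; this is presumably what is meant by ``a small variant of Voisin's methods'' in the introduction. I expect this step to be the real content of the proof, with the non-vanishing side and the duality reduction playing essentially bookkeeping roles.
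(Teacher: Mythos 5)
Your duality reduction and your treatment of the non-vanishing half are fine and match the paper (the paper handles non-vanishing by the scroll argument from Green--Lazarsfeld rather than the base-point-free pencil trick, but both are standard and that direction was already known). The problem is the vanishing half, where you explicitly defer ``the real content of the proof'' to an unproven expectation, and the specific route you sketch points at the wrong object. You propose to work with the kernel bundle $\mathcal{M}^{(n)}_L$ of the evaluation $H^0(L)\otimes\OO_{C^{(n)}}\to \mathcal{E}^{(n)}_L$ and to feed in the gonality hypothesis through control of the cohomology of $L(-\xi)$. But for $\deg(L)\gg 0$ one has $H^1(C,L-\xi)=0$ for \emph{every} $\xi$ of degree $n$, regardless of gonality, so that control is vacuous and cannot be where the hypothesis $n<\gon(C)$ does its work.

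The missing idea is to decouple the two line bundles: on $C_{p+1}$ one should use the tautological bundle $E_{K_C}$ of the \emph{canonical} bundle and its evaluation map $\ev_{K_C}\colon H^0(K_C)\otimes\OO_{C_{p+1}}\to E_{K_C}$, whose surjectivity as a map of bundles is literally the statement that $K_C$ is $p$-very ample, i.e.\ (by Riemann--Roch) that $\gon(C)\ge p+2$. The bundle $L$ enters only through the line bundle $N_L=\det E_L$: one twists $\ev_{K_C}$ by $N_L$, identifies $H^0(C_{p+1},E_{K_C}\otimes N_L)$ with the space of Koszul cycles $Z_{p,1}(C,K_C;L)$ and the induced map on $H^0$ with the Koszul differential (Voisin's computation, Lemma \ref{Voisin.Lemma}), and then the desired surjectivity on global sections follows from $H^1(C_{p+1},M_{K_C}\otimes N_L)=0$, where $M_{K_C}=\ker(\ev_{K_C})$; this last vanishing is pure Serre--Fujita positivity of $N_L$ for $\deg(L)\gg 0$ (Lemma \ref{Serre.Vanishing.Lemma}) and needs no geometric input. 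Without this identification --- in particular without recognizing that the gonality hypothesis enters as bundle-level surjectivity of $\ev_{K_C}$ rather than through the Abel--Jacobi rigidity of $C^{(n)}$ or the cohomology of $L(-\xi)$ --- your sketch does not close, and the step you flag as the principal obstacle is exactly the step that is absent.
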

 Thus one can read off the gonality of a curve from the resolution of the ideal of $C$ in the embedding defined by any one line bundle of sufficiently large degree. The cases $p =r-1, p = r-2$ were established by Green  \cite{Kosz1}, and the general statement    was conjectured  in \cite{GL3}, where it was observed that if $1 \le p \le r -  \gon(C)$, then $K_{p,1}(C;L) \ne 0$.\footnote{In fact, suppose that $p: C \rightarrow \PP^1$ is a branched covering of degree $k$. Then when $\deg(L) \gg 0$ the linear spaces spanned by the fibres of $p$ sweep out a $k$-dimensional scroll $S \subset \PP^r$ containing $C$. But the resolution of $I_{S/\PP^r}$ has a linear strand of length $r-k$, which forces $K_{p,1}(C;L) \ne 0$ for $1 \le p \le r-k$. Thus the essential content of the Theorem is that if $K_{r-k,1}(C;L) \ne 0$ and $\deg L \gg 0$,  then $C$ carries a pencil of degree $\le k$.} Using Voisin's results \cite{Voisin1}, \cite{Voisin2}  on syzygies of general canonical curves, Aprodu and Voisin \cite{Aprodu}, \cite{AproduVoisin} proved the statement of the Theorem   for a general curve of each gonality. We show (Remark \ref{Effective.Gonality}) that the conclusion of the Theorem holds  for instance once $\deg(L)  \ge  g^3 $, but we suspect that it should be enough to assume a lower bound on $d$ that is linear in $g$.

 Theorem \ref{Gonality.Thm} follows from a more general result concerning the  weight one asymptotic syzygies associated to an arbitrary divisor $B$. Specifically, fix a line bundle $B$ on $C$, and with $L$ as above  consider  the $S = \Sym\, H^0(L)$ module 
 \[  R \ = \, R(B;L) \ = \ \bigoplus_m H^0(C, B + mL). \]
One can again form the graded minimal free resolution $E\bull(B;L)$ of $R(B;L)$ over $S$, giving rise to Koszul cohomology groups
$K_{p,q}(C,B;L)$. As in the case $B = \OO_C$ discussed in the previous paragraphs, the $K_{p,0}$ and the $K_{p,2}$ are completely controlled when $\deg L \gg 0$, and so the issue is  to understand the weight one groups $K_{p,1}(C, B;L)$ when $L$ has large degree. 

Recall  that $B$ is said to be $p$-\textit{very ample} if every effective divisor $\xi$ of degree $(p+1)$ on $C$ imposes independent conditions on the sections of $B$, i.e. if the natural map
\[   H^0(C,B) \lra H^0(C, B \otimes \OO_{\xi}) \]
is surjective for every $\xi \in C_{p+1} =_{\text{def}} \Sym^{p+1}C$.
Our second main result is:
\begin{theoremalpha} \label{Kp1(B).Theorem}
Fix $B$ and $p \ge 0$. Then 
\[  K_{p,1}(C, B; L ) \, = \,0  \ \text{ for all $L$ with $\deg L \gg 0$} \]
if and only if $B$ is $p$-very ample. 
\end{theoremalpha}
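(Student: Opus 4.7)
The plan is to adapt Voisin's strategy of replacing Koszul cohomology on $C$ with ordinary cohomology on the symmetric product $C_{p+1}$. For any line bundle $N$ on $C$, let $E_{p+1,N}$ denote the rank-$(p+1)$ tautological bundle on $C_{p+1}$ whose fibre at a divisor $\xi$ is $H^0(\xi, N|_\xi)$. The natural evaluation map $H^0(C,N) \otimes \OO_{C_{p+1}} \to E_{p+1,N}$ is surjective as a map of sheaves precisely when $N$ is $p$-very ample, which makes $C_{p+1}$ the natural setting for the theorem.

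The main step, and the principal technical obstacle, will be to establish for $\deg L \gg 0$ an identification of the form
\[ K_{p,1}(C,B;L) \;\cong\; \mathrm{coker}\!\left(\wedge^{p+1}H^0(L) \otimes H^0(B) \longrightarrow H^0\bigl(C_{p+1},\, E_{p+1,B} \otimes \det E_{p+1,L}\bigr)\right). \]
Starting from the standard description $K_{p,1}(C,B;L) = H^1(C, \wedge^p M_L \otimes B)$, where $M_L$ is the kernel of $H^0(L)\otimes \OO_C \twoheadrightarrow L$, I would push forward along the incidence correspondence in $C \times C_{p+1}$ and use Serre vanishing (valid for $\deg L \gg 0$) to kill the higher direct images, reducing an $H^1$ statement on $C$ to an $H^0$ statement on $C_{p+1}$. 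Arranging the twist to come out as exactly $\det E_{p+1,L}$, modulo a fixed correction independent of $L$, is the delicate part of setting this up.

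Granting this identification, the sufficient direction is clean. When $B$ is $p$-very ample, the kernel $M_{p+1,B}$ of $H^0(B)\otimes \OO_{C_{p+1}} \twoheadrightarrow E_{p+1,B}$ is a genuine vector bundle, and a diagram chase converts the cokernel above into $H^1\!\bigl(C_{p+1},\, M_{p+1,B} \otimes \det E_{p+1,L}\bigr)$. Since $\det E_{p+1,L}$ grows in positivity on $C_{p+1}$ as $\deg L$ increases, Serre vanishing kills this $H^1$ for $\deg L \gg 0$, forcing $K_{p,1}(C,B;L) = 0$.

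For the necessary direction, suppose $B$ is not $p$-very ample, say $H^0(B) \to E_{p+1,B}|_{\xi_0}$ fails to be surjective at some $\xi_0 \in C_{p+1}$. Then the image of $\wedge^{p+1}H^0(L) \otimes H^0(B)$ in the fibre $(E_{p+1,B} \otimes \det E_{p+1,L})|_{\xi_0}$ is contained in $\mathrm{im}\!\bigl(H^0(B) \to E_{p+1,B}|_{\xi_0}\bigr) \otimes \det E_{p+1,L}|_{\xi_0}$, which is a proper subspace. Since $E_{p+1,B} \otimes \det E_{p+1,L}$ is globally generated at $\xi_0$ once $\deg L \gg 0$, one can produce a global section whose value at $\xi_0$ lies outside this subspace, yielding a nonzero class in $K_{p,1}(C,B;L)$ for every sufficiently positive $L$.
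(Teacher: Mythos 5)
Your proposal is correct and follows essentially the same route as the paper: the tautological bundle $E_{p+1,B}$ on $C_{p+1}$, the identification of $K_{p,1}(C,B;L)$ with the cokernel of $H^0(B)\otimes \Lambda^{p+1}H^0(L) \to H^0(C_{p+1}, E_{p+1,B}\otimes \det E_{p+1,L})$ (the paper's Lemma 1.1, due to Voisin), Serre--Fujita vanishing on $C_{p+1}$ applied to the kernel bundle for the sufficient direction, and nonvanishing of sections of the cokernel sheaf twisted by $\det E_{p+1,L}$ for the necessary direction. The only differences are cosmetic (the paper proves the key identification exactly, via the projection formula and the restriction to $C\times C_p$, rather than asymptotically from $H^1(\Lambda^{p+1}M_L\otimes B)$), and your step you flag as delicate is precisely the content the paper isolates as Voisin's lemma.
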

\noi Serre duality implies that the vector spaces
\[   
K_{p,q}(C, B;L) \ \ \text{ and } \ \ K_{r-1-p, 2-q}(C, K_C - B; L)
\]
are naturally dual, $K_C$ being the canonical divisor of $C$, and one then finds that Theorem A is equivalent to the case $B = K_C$ of Theorem B. While this is arguably the most interesting instance of the result, it will become clear that decoupling $B$ and $L$ is helpful in guiding the argument.

When $B$ fails to be $p$-very ample, it is natural to introduce the invariant
\[
\gamma_p(B) \ = \ \dim \big\{ \xi \in C_{p+1}\,  \big | \, H^0(B) \lra H^0(B \otimes \OO_\xi )\text{ not surjective } \big\}.
\]
\begin{theoremalpha} \label{Hilb.Poly.Thm}
Let $L_d = dA + E$, where $A$ is an ample line bundle on $C$ and $E$ is arbitrary. Fix  $B$ and $p$, and assume that $B$ is not  $p$-very ample. Then  there is a polynomial $P(d)$ of degree $\gamma_p(B)$ in $d$ such that
\[  
\dim K_{p,1}(C, B; L_d) \, = \, P(d) \ \ \text{ for } d \gg 0. \]
 \end{theoremalpha}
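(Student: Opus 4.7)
The plan is to realize $K_{p,1}(C,B;L_d)$ as the space of global sections of a coherent sheaf on the symmetric product $C_{p+1}$ whose support has dimension exactly $\gamma_p(B)$, and then to read off polynomial growth via Snapper's asymptotic Riemann--Roch.

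First I would set up the tautological secant-sheaf package on $C_{p+1}$ that underlies the proof of Theorem B. Let $E_B$ denote the tautological rank-$(p+1)$ vector bundle on $C_{p+1}$ whose fiber over $\xi \in C_{p+1}$ is $H^0(C, B \otimes \OO_\xi)$, and let
\[
\FF_B \ := \ \textnormal{coker}\Bigl( H^0(C,B) \otimes \OO_{C_{p+1}} \lra E_B \Bigr).
\]
Then $\FF_B$ is a coherent sheaf on $C_{p+1}$ whose scheme-theoretic support is precisely the degeneracy locus of the evaluation map, i.e.\ the locus $Z_p(B)$ appearing in the definition of $\gamma_p(B)$. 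Since $B$ fails to be $p$-very ample, $\FF_B \neq 0$ and $\dim \textnormal{supp}\,\FF_B = \gamma_p(B)$.

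Second, the Voisin-style argument used to prove Theorem B upgrades to give a natural identification
\[
K_{p,1}(C,B;L_d) \ \cong \ H^0\bigl(C_{p+1}, \, \FF_B \otimes \mathcal{M}_d\bigr),
\]
where $\mathcal{M}_d$ is the tautological twist on $C_{p+1}$ built from $L_d$ (essentially $\det E_{L_d}$ up to a fixed line bundle that is independent of $d$). Because $L_d = dA + E$ with $A$ ample, the first Chern class of $\mathcal{M}_d$ has the form $d\cdot\alpha + \beta$, with $\alpha$ an ample class on $C_{p+1}$ independent of $d$. In particular, for $d \gg 0$ the higher cohomology of $\FF_B \otimes \mathcal{M}_d$ vanishes, and hence
\[
\dim K_{p,1}(C,B;L_d) \ = \ \chi\bigl(C_{p+1},\, \FF_B \otimes \mathcal{M}_d\bigr).
\]

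Third, I would invoke Snapper's theorem: for a coherent sheaf $\mathcal{G}$ on a projective variety and a line bundle $\mathcal{L}$ whose restriction to the top-dimensional components of $\textnormal{supp}(\mathcal{G})$ is ample, $d \mapsto \chi(\mathcal{G} \otimes \mathcal{L}^{\otimes d})$ is polynomial in $d$ of degree exactly $\dim \textnormal{supp}(\mathcal{G})$. Applied to $\FF_B$ with $\mathcal{L}$ equal to the ample direction $\alpha$ of $\mathcal{M}_d$, this produces a polynomial $P(d)$ of degree $\gamma_p(B)$ satisfying $\dim K_{p,1}(C,B;L_d) = P(d)$ for $d \gg 0$, as desired.

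The main obstacle is the second step, that is, producing the clean identification of $K_{p,1}(C,B;L_d)$ with $H^0$ of $\FF_B$ twisted by the correct tautological line bundle. Inside the framework of Theorem B this identification should fall out once the appropriate secant / Eagon--Northcott complex on $C_{p+1}$ has been recognized, but one must in addition verify that $\alpha$ is genuinely ample on $Z_p(B)$ so that Snapper's theorem delivers a polynomial of the full expected degree $\gamma_p(B)$ rather than a smaller one. After that, the polynomiality statement itself and the cohomological vanishing for $d \gg 0$ are essentially formal consequences of the geometric setup.
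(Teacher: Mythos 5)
Your proposal is correct and follows essentially the same route as the paper: identify $K_{p,1}(C,B;L_d)$ with $H^0(C_{p+1}, \FF_B \otimes N_{L_d})$ via the cokernel sheaf $\FF_B$ of the evaluation map (using the Voisin lemma plus Serre--Fujita vanishing applied to the kernel and image of $\ev_B$), note that $N_{L_d} = N_E + dS_A$ with $S_A$ ample on $C_{p+1}$, and conclude by asymptotic Riemann--Roch that the dimension is eventually the Hilbert polynomial of $\FF_B$ with respect to $S_A$, of degree $\dim\textnormal{Supp}\,\FF_B = \gamma_p(B)$. The one worry you flag --- ampleness of the twisting class on the support of $\FF_B$ --- is automatic, since $S_A$ is ample on all of $C_{p+1}$ whenever $A$ is ample on $C$.
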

 \noi In some cases, we are also able to compute the leading coefficient of $P(d)$. We note that  Yang \cite{Yang} has recently proven (by somewhat related arguments) that the dimensions of the vector spaces $K_{p,0}$ and $K_{p,1}$ grow polynomially on an arbitrary variety.
 
Theorems B and C follow in a surprisingly simple manner from a small variant of the Hilbert scheme computations pioneered by Voisin in her proof \cite{Voisin1}, \cite{Voisin2} of Green's conjecture for general canonical curves.  It is well known that $K_{p,1}(C,B;L)$ can be computed as the cohomology of the Koszul-type complex
\[
\Lambda^{p+1}H^0(L) \otimes H^0(B) \lra \Lambda^p H^0(L) \otimes H^0(B+L) \lra \Lambda^{p-1}H^0(L) \otimes H^0(B+2L), 
\]
and the basic strategy is to realize this complex geometrically. 
In brief, a line bundle $B$ on $C$ determines a vector bundle $E_B = E_{p+1, B}$ of rank $p+1$ on the symmetric product $C_{p+1}$ whose fibre at a point $\xi \in C_{p+1}$ is the vector space $H^0(C, B \otimes \OO_\xi)$. The natural map $H^0(B) \lra H^0(B \otimes \OO_\xi)$ globalizes to a homomorphism of vector bundles
\[  \ev_B = \ev_{p+1, B}:  H^0(C,B) \otimes_{\CC} \OO_{C_{p+1}} \lra \ E_B, \tag{*}
\]
and evidently $\ev_B$ is surjective as a map of vector bundles if and only if $B$ is $p$-very ample. On the other hand, if $N_L = \det E_L$, then it is well-known that 
$H^0(N_L) = \Lambda^{p+1} H^0(C, L)$, and twisting (*) by $N_L$ gives rise to a vector bundle map
\[  H^0(C, B) \otimes N_L \lra E_B \otimes N_L. \tag{**}\]
Computations of Voisin identify $H^0(C_{p+1}, E_B \otimes N_L)$ with the space $Z_{p,1}(C, B; L)$ of Koszul cycles, and hence $K_{p,1}(C,B;L) = 0$ if and only if the homomorphism
\[  H^0(C, B) \otimes H^0(C_{p+1},N_L) \lra H^0(C_{p+1}, E_B \otimes N_L)\]
determined by (**) is surjective. But assuming that $B$ is $p$-very ample, so that (**) is surjective as a map of bundles, this follows for $\deg L \gg 0$ simply by applying Serre-Fujita  vanishing to the kernel of (**). We note that the main difference from Voisin's  set-up  -- apart from separating $B$ and $L$, which clarifies the issue -- is that we push down to the symmetric product rather than working on the universal family over it. Some related computations had earlier appeared  in the paper \cite{Laz1}, where it was shown that one could see the syzygies of canonical curves in  cohomology related to the cotangent bundle $E_{\Omega_C}$ of the symmetric product,  but it has to be admitted that nothing came of these. 

We are grateful to Marian Aprodu, Gabi Farkas, B. Purnaprajna, Frank Schreyer, David Stapleton, Bernd Sturmfels, Brooke Ullery and  Claire Voisin for valuable discussions and encouragement.


\section{Proofs}

This section is devoted to the proofs of Theorems A, B and C from the Introduction. We keep the notation introduced there.\footnote{In addition, we continue to allow ourselves to be a little sloppy in confounding additive and multiplicative notation for divisors and line bundles.} Thus $C$ is a smooth projective curve of genus $g$, and $L$ is a very ample line bundle of degree $d$  on $C$ defining an embedding
\[  C \ \subseteq \ \PP H^0(L) \, = \, \PP^r. \] We fix an arbitrary line bundle on $B$ on $C$, and we are intrested in the Koszul cohomology groups \[ K_{p,q}(B;L)\ =\ K_{p,q}(C, B;L)\]arising as the cohomology of the Koszul-type complex:

\Small
\vskip -16pt
\[
\Lambda^{p+1}H^0(L) \otimes H^0(B+(q-1)L) \lra \Lambda^p H^0(L) \otimes H^0(B+qL) \lra \Lambda^{p-1}H^0(L) \otimes H^0(B+(q+1)L). 
\]
\normalsize
We recall that results of Green and others  imply that if $d = \deg(L) \gg 0$, then $K_{p,q}(B;L) = 0$ for all $q \ge 3$, and:
\begin{align*} 
K_{p,0}(B;L) \, \ne \, 0 \ &\Longleftrightarrow  \ p \in [0, h^0(B)-1] \\
K_{p,2}(B;L) \, \ne \, 0 \ &\Longleftrightarrow \  p \in [r - h^1(B), r - 1]  \end{align*}
(cf \cite[Proposition 5.1, Corollary 5.2]{ASAV}).\footnote{In particular, if  $H^0(B) = 0$ then $K_{p,0}(B;L) = 0$ for all $p$, and if $H^1(B) = 0$, then $K_{p,2}(B;L) = 0$ for all $p$ provided that $\deg L \gg 0$.} So the issue is to understand which of the groups $K_{p,1}(B;L)$ vanish when $\deg L \gg 0$.

Write $C_k $ for the $k^{\text{th}}$ symmetric product of $C$, viewed as parameterizing all effective divisors on $C$ of degree $k$. We consider the  commutative diagram:
\begin{equation}
\begin{gathered}
 \xymatrix{& C & \\
 C \times C_p \   \ar@{^{(}->}[rr] ^{j_{p+1}}\ar[dr]_{ \sigma_{p+1}} \ar[ur]^{pr_1}& & \  C  \times C_{p+1} \ar[dl]^{pr_2}  \ar[ul]_{pr_1}\\ & C_{p+1}&
  } 
 \end{gathered}
 \end{equation}
where $\sigma_{p+1}$ and $ j_{p+1}$ are the maps defined by 
\[   \sigma_{p+1}( x , \xi) \, = \, x + \xi \ \ , \ \ j_{p+1}(x, \xi) \, = \, (x , x + \xi). \]
Note that $\sigma_{p+1}$ realizes $C \times C_p$ as the universal family of degree $p+1$ divisors over $C_{p+1}$. 

The proofs revolve around two well-studied tautological sheaves on $C_{p+1}$. First given a line bundle $B$ on $C$, define
\[   E_B \ = \ E_{p+1, B} \ =_{\text{def}} \ \sigma_{p+1, *} \,  \pro_1^* (B).\]
Thus $E_B$ is a vector bundle of rank $p+1$ on $C_{p+1}$ whose fibre at $\xi \in C_{p+1}$ is identified with the vector space $H^0(C, B \otimes \OO_\xi)$.  It follows from the construction that  $H^0(C_{p+1}, E_B) = H^0(C, B)$, which gives rise to a homomorphism: \begin{equation} \label{ev.B.equation}
\ev_B \ = \ \ev_{p+1,B} : H^0(C, B) \otimes_{\CC} \OO_{C_{p+1}}\lra E_B
\end{equation}
of vector bundles on $C_{p+1}$.  Evidently $\ev_B$ is surjective if and only if $B$ is $p$-very ample. Next, given a line bundle $L$ on $C$, put
\[    N_L \ = \ N_{p+1,L} \ = \ \det E_L. \]
Note that $\Lambda^{p+1} \ev_L$ determines  a map
\[   \Lambda^{p+1} H^0(C,L) \lra H^0(C_{p+1}, N_L), \]
and it was established eg in \cite{EGL} and \cite{Voisin1} that this is an isomorphism. Twisting $\ev_B$ by $N_L$, one arrives at the vector bundle map
\begin{equation} \label{Basic.VB.Map}
H^0(C,B) \otimes_{\CC} N_L \lra E_B \otimes N_L 
\end{equation}
that lies at the heart of the proof. 

Our main results follow immediately from two lemmas whose proofs appear at the end of this section. The first, which is effectively due to Voisin, states that $K_{p,1}(B;L) = 0$ if and only if \eqref{Basic.VB.Map}
is surjective on global sections. The second asserts that as $L$ gets very positive on $C$,  the corresponding line bundles $N_L$  become sufficiently positive on $C_{p+1}$ to satisfy a Serre-type vanishing theorem.
\begin{lemma}[Voisin] \label{Voisin.Lemma}
The global sections of $E_B \otimes N_L$ are identified with the space 
\[
Z_{p,1}(B;L) \ = \ \ker \Big ( \Lambda^p H^0(L) \otimes H^0(B+L) \lra  \Lambda^{p-1}H^0(L)\otimes H^0(B+ 2L)\Big )\]
of Koszul cycles, and the homomorphism 
\[ 
H^0(C,B) \otimes H^0(C_{p+1}, N_L) \, = \, H^0(C,B) \otimes \Lambda^{p+1} H^0(C,L) \lra H^0(C_{p+1}, E_B \otimes N_L) \]
arising from \eqref{Basic.VB.Map} is identified with the Koszul differential. In particular, 
\[  K_{p,1}(C, B; L) \ = \ 0 \] if and only if the bundle map \eqref{Basic.VB.Map} determines a surjection on global sections. 
\end{lemma}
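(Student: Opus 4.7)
The plan is to compute $H^0(C_{p+1}, E_B \otimes N_L)$ by pulling back along the universal family $\sigma := \sigma_{p+1} : C \times C_p \to C_{p+1}$ and to identify the result with the Koszul cycles. Since $\sigma$ is finite and $E_B = \sigma_* \pro_1^* B$, the projection formula gives
\[
H^0(C_{p+1},\, E_B \otimes N_L) \;=\; H^0(C \times C_p,\, \pro_1^* B \otimes \sigma^* N_L).
\]
The key geometric input is the isomorphism
\[
\sigma^* N_L^{(p+1)} \;\cong\; \pro_1^* L \otimes \pro_2^* N_L^{(p)} \otimes \mathcal{O}_{C \times C_p}(-\delta),
\]
where $\delta = \{(x,D) : x \in \mathrm{supp}(D)\}$ is the incidence divisor.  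This comes from the decomposition $L \otimes \mathcal{O}_{x+D} = L_x \oplus L|_D$ valid off $\delta$, which gives a short exact sequence $0 \to \pro_2^* E_L^{(p)} \to \sigma^* E_L^{(p+1)} \to \pro_1^* L \to 0$ there; taking determinants and correcting for the ramification of $\sigma$ along $\delta$ produces the twist by $\mathcal{O}(-\delta)$. I would verify the base case $p = 1$ directly: $\sigma_2 : C \times C \to C_2$ is the $S_2$-quotient, and $\sigma_2^* N_L^{(2)} = (L \boxtimes L)(-\Delta)$, whose sections in the sign-isotypic part recover $\Lambda^2 V = H^0(N_L^{(2)})$.

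Substituting and applying K\"unneth with the identification $H^0(C_p, N_L^{(p)}) = \Lambda^p V$ (where $V = H^0(L)$), $H^0(E_B \otimes N_L)$ is realized as the kernel of the restriction-to-$\delta$ map
\[
H^0(B+L) \otimes \Lambda^p V \;\longrightarrow\; H^0(B+2L) \otimes \Lambda^{p-1} V,
\]
where on the target one uses the iso $\delta \cong C \times C_{p-1}$ and the inductive application $N_L^{(p)}|_\delta \cong \pro_1^* L \otimes \pro_2^* N_L^{(p-1)}$ of the same formula at the next lower level. The main technical step is to verify by a direct local calculation that this boundary map coincides (up to sign) with the Koszul differential $d$: a generator $t' \otimes (s_0 \wedge \cdots \wedge s_p)$ has value at $(x, D') \in \delta$ equal to $\sum_i (-1)^i (t' s_i)(x) \cdot (s_0 \wedge \cdots \widehat{s}_i \cdots \wedge s_p)(D')$, exactly reproducing $d$. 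This identification yields $H^0(E_B \otimes N_L) = Z_{p,1}(B;L)$.

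For the second assertion, trace the map induced by $\ev_B \otimes \mathrm{id}_{N_L}$ on global sections. A pure tensor $t \otimes (s_0 \wedge \cdots \wedge s_p) \in H^0(B) \otimes H^0(N_L)$ transports to $\pro_1^* t \cdot \sigma^*(s_0 \wedge \cdots \wedge s_p) \in H^0(C \times C_p, \pro_1^* B \otimes \sigma^* N_L)$. Expanding $\sigma^*(s_0 \wedge \cdots \wedge s_p)$ via the wedge of evaluations along the filtration yields $\sum_i (-1)^i \pro_1^* s_i \otimes \pro_2^*(s_0 \wedge \cdots \widehat{s}_i \cdots \wedge s_p)$, which under K\"unneth becomes $\sum_i (-1)^i (s_0 \wedge \cdots \widehat{s}_i \cdots \wedge s_p) \otimes (s_i t) = d(t \otimes s_0 \wedge \cdots \wedge s_p)$. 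Thus the evaluation map is the first Koszul differential, so $K_{p,1}(B;L) = \mathrm{coker}$ vanishes iff the map is surjective. The main obstacle is the residue identification in the second paragraph --- matching the restriction-to-$\delta$ map with the Koszul differential via a careful local computation along $\delta$ is the technical core of Voisin's Hilbert scheme argument.
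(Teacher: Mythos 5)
Your proposal is correct and follows essentially the same route as the paper: projection formula along $\sigma_{p+1}$, the key isomorphism $\sigma_{p+1}^*N_{p+1,L}\cong (L\boxtimes N_{p,L})(-\delta)$ (your incidence divisor $\delta$ is the paper's $D=\mathrm{im}(j_p)$), K\"unneth, and the identification of the restriction-to-$\delta$ and evaluation maps with the Koszul differentials. The only difference is that you sketch the local verification of the key isomorphism and of the differential, where the paper simply cites Aprodu--Nagel, Lemma 5.4.
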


\begin{lemma} \label{Serre.Vanishing.Lemma} Let $\FF$ be any coherent sheaf on $C_{p+1}$. There exists an integer $d_0 = d_0(\FF)$ having the property that if $d = \deg(L) \ge d_0(\FF)$, then
\[
\HH{i}{C_{p+1}}{\FF \otimes N_L} \ = \ 0 \ \ \text{ for }\ i > 0.
\]
\end{lemma}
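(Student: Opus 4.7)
The strategy is to reduce to a standard vanishing theorem on $C_{p+1}$ by showing that $N_L$ becomes arbitrarily positive as $d = \deg L \to \infty$. First, I would compute the divisor class of $N_L$. For each point $p \in C$, let $X_p \subset C_{p+1}$ denote the divisor of effective divisors containing $p$. Pushing the exact sequence $0 \to L(-p) \to L \to L \otimes \OO_p \to 0$ on $C$ forward along the finite map $\sigma_{p+1}$ (so higher direct images vanish) gives an exact sequence $0 \to E_{L(-p)} \to E_L \to \OO_{X_p} \to 0$ on $C_{p+1}$, and taking determinants yields $N_L \cong N_{L(-p)} \otimes \OO(X_p)$. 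Iterating, if $L = L_0(D)$ for an effective divisor $D = \sum p_i$ on $C$, then $N_L \cong N_{L_0}\bigl(\sum X_{p_i}\bigr)$. Since the $X_p$ form an algebraic family as $p \in C$ varies, numerically $[N_L] \equiv [N_{L_0}] + (d - \deg L_0)[X]$, where $[X] := [X_p]$.

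Next, I would show that $[X]$ is ample via the Nakai--Moishezon criterion. For every irreducible $V \subseteq C_{p+1}$ of positive dimension, there is some $p \in C$ with $V \not\subseteq X_p$ (otherwise every divisor in $V$ would contain every point of $C$, forcing $\dim V = 0$); then $V \cap X_p$ is a proper intersection of pure dimension $\dim V - 1$, and induction on dimension gives $[X]^{\dim V} \cdot V = [X]^{\dim V - 1} \cdot [V \cap X_p] > 0$. With $\OO(X)$ ample, Fujita's vanishing theorem supplies $m_0 = m_0(\FF)$ such that $H^i(\FF \otimes \OO(m_0 X) \otimes P) = 0$ for all $i > 0$ and all nef line bundles $P$ on $C_{p+1}$. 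Now fix an auxiliary $L_0$ of some degree $d_0$ large enough that $N_{L_0}$ is ample --- for $L_0$ sufficiently positive, the evaluation $H^0(L_0) \otimes \OO_{C_{p+1}} \twoheadrightarrow E_{L_0}$ gives a closed embedding $C_{p+1} \hookrightarrow \textnormal{Gr}(p+1, H^0(L_0))$ under which $N_{L_0}$ is the restriction of the Plücker polarization. For any $L$ of degree $d \ge d_0 + m_0 + g$, Riemann--Roch guarantees $L \otimes L_0^{-1}$ has a section, so $L = L_0(D)$ for some effective $D$ of degree $d - d_0$. By the first step, $N_L \otimes \OO(-m_0 X)$ has numerical class $[N_{L_0}] + (d - d_0 - m_0)[X]$, a sum of two nef classes and therefore nef; applying Fujita with $P = N_L \otimes \OO(-m_0 X)$ then yields the desired vanishing.

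The principal obstacle I anticipate is the ampleness of $[X]$: the Nakai--Moishezon induction requires verifying that $V \cap X_p$ is genuinely proper at each stage, which comes down to the (easy but essential) observation that the $X_p$ cannot all contain a given positive-dimensional subvariety. The remaining steps are routine --- a determinant computation in Step 1, and a direct application of Fujita's vanishing in Step 3.
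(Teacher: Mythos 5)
Your proof is correct and follows essentially the same route as the paper's: both express $N_L$ as $N_{L_0}$ plus a large multiple of the ample class $[X_p]$ (via the determinant of the exact sequence $0 \to E_{L(-p)} \to E_L \to \OO_{X_p} \to 0$) and then invoke Fujita vanishing with a nef twist. The only differences are cosmetic: the paper takes the ampleness of $S_A$ for $A$ ample as known rather than running Nakai--Moishezon (where you should note that $V \cap X_p \neq \emptyset$ for the chosen $p$, which holds since the supports of the divisors parametrized by a positive-dimensional $V$ sweep out all of $C$), and it uses only that $N_{L_0}$ is globally generated, hence nef, when $L_0$ is $p$-very ample, rather than ampleness via a Grassmannian embedding.
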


Granting the lemmas for now, we prove the main results.

\begin{proof} [Proof of Theorem \ref{Kp1(B).Theorem}]  Assume that $B$ is $p$-very ample, so that   $\ev_B$  in \eqref{ev.B.equation} is surjective. 
Denote by $M_B = M_{p+1,B}$ its kernel:
\begin{equation} \label{MBundle.Eqn}
0  \lra M_B \lra H^0(C,B) \otimes \OO_{C_{p+1}} \lra E_B \lra 0. \end{equation}
To show that $K_{p,1}(B;L) = 0$ when $\deg L \gg 0$, it suffices by Lemma \ref{Voisin.Lemma} to prove that
\begin{equation} \label{Vanishing.H1} \HH{1}{C_{p+1}}{M_B \otimes N_L} \ = \ 0 \end{equation}
for very positive $L$. But this follows from Lemma \ref{Serre.Vanishing.Lemma}. Conversely, if $\ev_B$ is not surjective, then it is elementary -- and we will see momentarily in the proof of Theorem \ref{Hilb.Poly.Thm} -- that $K_{p,1}(B;L) \ne 0$ for every sufficiently positive $L$. 
\end{proof}
\begin{remark} Proposition \ref{Eff.Bound.Proposition} below gives an effective lower bound on $\deg(L)$ that is sufficient to guarantee the vanishing \eqref{Vanishing.H1}. \qed
\end{remark}

\begin{proof}[Proof of Theorem \ref{Hilb.Poly.Thm}] Denote by $M_B$ and $\FF_B$ respectively the kernel and cokernel of $\ev_B$:
\begin{equation} \label{ThmCEqn}
 0 \lra M_B \lra H^0(B) \otimes \OO_{C_{p+1}} \lra E_B \lra \FF_B \lra 0. \end{equation}
Taking $L_d = dA + E$ as in the statement of the Theorem, put $N_d = N_{L_d}$.  We will see in the proof of Lemma \ref{Serre.Vanishing.Lemma} below that 
\[ N_d  \ = \ N_E + dS_A, \]
where $S_A$ is an ample divisor on $C_{p+1}$. On the other hand, it follows from the two lemmas that for $d \gg 0$
\[    K_{p,1}(C, B; L_d) \ = \   \HH{0}{C_{p+1}}{\FF_B \otimes N_d}. \]
Therefore  $\dim K_{p,1}(B;L_d)$ is given for $d \gg 0$ by the Hilbert polynomial of $\FF_B \otimes N_E$ with respect to $S_A$. But   $\gamma_p(B) = \dim \textnormal{Supp}\,  \FF_B$, and the result follows. 
\end{proof}

\begin{remark}
This argument shows that $ K_{p,0}(C,B;L_d)= \HH{0}{C_{p+1}}{M_B \otimes N_d}$ provided that $d$ is large. Hence (assuming that $p \le r(B)$) the dimension of this  Koszul group always grows as a polynomial of degree $(r(B) - p)$ in $d$ when $d \gg 0$.\footnote{The arguments of   \cite{Yang} show that  analogously on a variety of  dimension $n$,  $\dim K_{p,0}$ grows as a polynomial of degree $n(r(B) - p)$.} In other words, it is the growth of the $K_{p,1}$ groups that exhibit interesting dependence on geometry. \qed \end{remark}

We next recall the well-known argument that the case $B = K_C$ of Theorem  \ref{Kp1(B).Theorem} implies the Gonality Conjecture.
\begin{proof} [Proof of Theorem \ref{Gonality.Thm}]
Fix $p \le g$. We  need to show that if $\deg(L) \gg 0$, and if
\[  K_{r(L) - p,1}(C; L) \ \ne \ 0 \tag{*}, \]
then $C$ carries a pencil of degree $\le p$. By duality, (*) implies that 
\[  K_{p-1, 1}(C, K_C; L) \ \ne  \ 0, \]
and hence by Theorem \ref{Kp1(B).Theorem} there exists an effective divisor $\xi \in C_p$ of degree $p$ that fails to impose independent conditions on $|K_C|$. But then $\xi$ moves in a non-trivial linear series thanks to Riemann-Roch.
\end{proof}

We conclude this section by proving the two lemmas stated above. 
\begin{proof} [Proof of Lemma \ref{Voisin.Lemma}]
It follows from the projection formula and the constructions that
\begin{align*} \HH{0}{ C_{p+1}}{ E_B \otimes N_L} \ &= \ \HH{0}{C \times C_p}{pr_1^*B \otimes \sigma_{p+1}^*N_L)}\\ &= \ \HH{0}{C \times C_p}{(j_{p+1})^*(pr_1^*B \otimes pr_2^*N_L))}. \end{align*} 
Moreover the map induced by \eqref{Basic.VB.Map} on global sections is identified with  the restriction 
\[
\HH{0}{C \times C_{p+1}}{B \boxtimes N_L } \lra \HH{0}{C \times C_p} {{(B \boxtimes N_L )}|(C\times C_p)}.\]
But this is exactly Voisin's Hilbert-schematic interpretation of Koszul cohomology, and from this point one can argue just as in \cite[Lemma 5.4]{AproduNagel}. In brief, one observes that on $C \times C_{p}$ one has an isomorphism
\[    j_{p+1}^* \big( N_{p+1,L} \big) \ = \ \big( L \boxtimes  N_{p,L} \big)(-D),  \]
where  $D\subseteq C \times C_p$ is the image of $j_p: C \times C_{p-1} \hookrightarrow C \times C_p$. Therefore
\[
\HH{0}{C\times C_p}{(j_{p+1})^*(B \boxtimes N_{p+1,L}) }\]
is identified with 
\[ \ker \Big( \, \HH{0}{C \times C_p}{\OO_C(B+ L) \boxtimes N_{p,L}}  \lra  \HH{0}{C \times C_{p-1}}{\OO_C(B+2L) \boxtimes N_{p-1,L}}\,  \Big), \]
and the assertion follows.
\end{proof}

\begin{proof} [Proof of Lemma \ref{Serre.Vanishing.Lemma}]
Given a divisor $A$ on $C$, the divisor $T_A =_{\text{def}} \sum pr_i^*(A)$ on the Cartesian product $C^{ p+1}$ descends to a divisor $S_A = S_{p+1,A}$ on $C_{p+1}$. For example, if $A = x_1 + \ldots + x_d$, then 
\[  S_A \ = \ C_{p, x_1} + \ldots + C_{p, x_d} \ \in \ \Div(C_{p+1}) , \]
where $C_{p,x}$ denotes the image of the map $C_p \hookrightarrow C_{p+1}$ given by $\xi \mapsto \xi  +x$. One has $S_{A_1 + A_2} = S_{A_1} + S_{A_2}$, and $S_A$ is ample on $C_{p+1}$ if and only if $A$ is ample on $C$. Observe next that if $L$ is line bundle on $C$, then
$
N_{L+A}= N_L + S_A $ on $C_{p+1}$. This is well-known, but it can be checked directly from the definitions by observing that if $x \in C$ is a point  then there is an exact sequence
\[  0 \lra E_L \lra E_{L(x)} \lra \OO_{C_{p,x}} \lra 0 \]
of sheaves on $C_{p+1}$. 

Now fix an ample divisor $A$ of degree $a$ on $C$ and a coherent sheaf $\FF$ on $C_{p+1}$. By Fujita-Serre vanishing, there exists an integer $m_0 = m_0(\FF)$ such that if $P$ is any nef divisor on $C_{p+1}$, then 
\[  \HH{i}{C_{p+1}}{\FF(m  S_A + P )} \ = \ 0 \ \text{ for } \, i > 0 \tag{*} \]
whenever $m \ge m_0$. Put
\[  d_0 \ = \ d_0(\FF) \ = \ (2g + p) + m_0a, \]
and suppose that $\deg(L) \ge d_0$. Then $L = L_0 + m_0A$ where $L_0$ is $p$-very ample, and in particular $N_{L_0}$ is globally generated. Therefore 
\[  N_L \ = \  m_0 S_A + \textnormal{ ( nef ) }, \]
and so (*) gives the required vanishing. 
\end{proof}

\section{Complements}

This section is devoted to some additional results, and a conjecture about what one might hope for in higher dimensions.

We start by establishing an effective version of Theorem \ref{Kp1(B).Theorem}. Since the statement is presumably far from optimal we only sketch the proof. \begin{proposition} \label{Eff.Bound.Proposition}
Assume that $B$ is $p$-very ample. Then
$K_{p,1}(C,B;L) =0$ for every line bundle $L$ with
\begin{equation} \label{Effective.Bound}
\deg(L) \ > \ (p^2 + p + 2)(g-1)   \, + \, (p+1)\deg(B). \end{equation}
\end{proposition}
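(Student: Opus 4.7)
By Lemma \ref{Voisin.Lemma}, the desired vanishing $K_{p,1}(C,B;L) = 0$ is equivalent to
$$H^{1}\bigl(C_{p+1},\, M_B \otimes N_L\bigr) \,=\, 0,$$
where $M_B = \ker(\ev_B)$ is the rank-$(h^{0}(B)-p-1)$ kernel bundle on $C_{p+1}$. The plan is to make the Serre-type vanishing of Lemma \ref{Serre.Vanishing.Lemma} quantitatively effective by invoking Kodaira vanishing on the smooth projective $(p+1)$-fold $C_{p+1}$.

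The essential inputs are the identity $N_{L+A} = N_L + S_A$ in $\operatorname{Pic}(C_{p+1})$ (established in the proof of Lemma \ref{Serre.Vanishing.Lemma}), the identity $\sigma_{p+1}^{*}N_L = (L \boxtimes N_{p,L})(-D)$ (from the proof of Lemma \ref{Voisin.Lemma}), and an explicit formula for the canonical bundle of the symmetric product, obtained from the ramified cover $C^{p+1} \to C_{p+1}$ via Riemann--Hurwitz. I would first decompose $L = L_0 + L_1$ with $\deg L_0$ of controlled size so that $L_0$ is $p$-very ample and $N_{L_0}$ is globally generated, and so that $S_{L_1}$ is ample; the Kodaira positivity requirement \textquotedblleft$N_L - K_{C_{p+1}}$ ample'' then translates into an explicit lower bound on $\deg L$, producing the $(p^{2}+p+2)(g-1)$ contribution once the canonical bundle of $C_{p+1}$ is written out in terms of the tautological classes $N_{K_C}$ and the discriminant.

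To handle the vector-bundle nature of $M_B$, I would feed the short exact sequence
$$0 \to M_B \otimes N_L \to H^{0}(B) \otimes N_L \to E_B \otimes N_L \to 0$$
into its long exact cohomology sequence, reducing the vanishing of $H^{1}(M_B \otimes N_L)$ to higher-cohomology vanishings of the line bundle $N_L$ (immediate from Kodaira on $C_{p+1}$) and of $E_B \otimes N_L$. The latter is handled inductively in $p$: the projection formula along the finite map $\sigma_{p+1}\colon C \times C_p \to C_{p+1}$ converts it into a statement on $C \times C_p$, where the identity $\sigma_{p+1}^{*}N_L = (L \boxtimes N_{p,L})(-D)$ combined with K\"unneth and the inductive hypothesis on $C_p$ closes the argument. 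The rank-$(p+1)$ determinant twist $\det E_B = N_B$ appearing in this reduction is responsible for the $(p+1)\deg B$ contribution to the bound.

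The main obstacle is the bookkeeping in this induction: one must track how the diagonal divisor $D \subset C \times C_p$ and the degree of $N_{p,L}$ interact with the canonical-bundle correction at each inductive level, and verify that the accumulated estimates telescope cleanly to exactly the stated bound $(p^{2}+p+2)(g-1) + (p+1)\deg B$ rather than to something strictly weaker. Since the authors indicate the bound is \emph{a priori} suboptimal, modest slack in the telescoping is acceptable.
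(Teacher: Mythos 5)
Your reduction in the third paragraph is circular, and this is where the proof breaks. From the long exact cohomology sequence of
\[
0 \lra M_B \otimes N_L \lra H^0(B) \otimes N_L \lra E_B \otimes N_L \lra 0
\]
one extracts
\[
H^0(B)\otimes H^0(N_L) \overset{\mu}{\lra} H^0(E_B\otimes N_L) \lra H^1(M_B\otimes N_L) \lra H^0(B)\otimes H^1(N_L),
\]
so the vanishing of $H^1(M_B\otimes N_L)$ is \emph{not} a consequence of higher-cohomology vanishing for $N_L$ and for $E_B\otimes N_L$: exactness forces $\tn{coker}(\mu)$ to inject into $H^1(M_B\otimes N_L)$, so you would already need $\mu$ surjective --- and by Lemma \ref{Voisin.Lemma} the surjectivity of $\mu$ is precisely the statement $K_{p,1}(C,B;L)=0$ you are trying to prove. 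The connecting map $H^0(E_B\otimes N_L)\to H^1(M_B\otimes N_L)$ is exactly the obstruction, and no amount of Kodaira vanishing for the line bundle $N_L$, nor induction on $E_B\otimes N_L$ via the finite map $\sigma_{p+1}$, will kill it. (Relatedly, your opening claim that $K_{p,1}=0$ is \emph{equivalent} to $H^1(M_B\otimes N_L)=0$ is only an implication in the direction you need, but that part is harmless.)

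What is genuinely required is a positivity input for the kernel bundle $M_B$ itself, and this is the idea missing from your plan. The paper's route: since $h^0(B)$ may be large, first replace $H^0(B)$ by a general subspace of dimension $2p+2$, producing a rank-$(p+1)$ kernel bundle $M_B'$ with $M_B/M_B'$ a trivial bundle, so that it suffices to show $H^1(C_{p+1}, M_B'\otimes N_L)=0$. One then uses that $M_B'\otimes N_B$ is globally generated with $\det M_B' = -N_B$, writes $M_B'\otimes N_L = (M_B'\otimes N_B)\otimes \det(M_B'\otimes N_B)\otimes K_{C_{p+1}}\otimes A$ with $A$ ample, and applies the Griffiths vanishing theorem. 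The numerical bound \eqref{Effective.Bound} comes from checking that $N_L-(p+1)N_B-K_{C_{p+1}}$ is ample, which is done by pulling back to $C^{p+1}$ via $\pi^*N_L = T_L-\Delta$ and using that $T_D+\Delta$ is nef iff $\deg D\ge p(g-1)$. Your auxiliary ingredients ($N_{L+A}=N_L+S_A$, $K_{C_{p+1}}=N_{K_C}$, the pullback to the Cartesian product) do all appear there, but the vector-bundle vanishing theorem that replaces your long exact sequence is the essential step you have not supplied.
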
 
\begin{proof} [Sketch of Proof] Keeping notation as in the proof of Theorem \ref{Kp1(B).Theorem}, one needs to prove that $\HH{1}{C_{p+1}}{M_B \otimes N_L} =0$  when $\deg(L)$ satisfies the stated bound. If $h^0(C, B) > 2(p+1)$, we replace $H^0(C, B)$ in \eqref{MBundle.Eqn} by a general subspace of dimension $2p+2$ to define a vector bundle $M_B^\pr$ of rank $p+1$ sitting in an exact sequence
\[  0 \lra M_B^\pr \lra M_B \lra \oplus\, {\OO}_{C_{p+1}}\lra 0, \]
and one is reduced to proving that $\HH{1}{C_{p+1}}{M_B^\pr \otimes N_L} =0$. Note that $M^\pr \otimes N_B$ is globally generated and that $\det M_B^\pr = -N_B$. 

We assert that if $L$ satisfies \eqref{Effective.Bound}, then
\[
N_L - (p+1)N_B - K_{C_{p+1}} \  \text{ is ample} . \tag{*}\]
Granting this, we see that if \eqref{Effective.Bound} holds, then
\[  M_B^\pr\otimes N_L \ = \ \big( M_B^\pr \otimes N_B) \otimes \det(M_B^\pr \otimes N_B) \otimes K_{C_{p+1}} \otimes A \]
where is $A$ is ample, so the Griffiths vanishing theorem \cite[7.3.2]{PAG} applies. For (*), it is equivalent to check the statement after pulling back by the quotient $\pi : C^{p+1} \rightarrow C_{p+1}$. One has $\pi^* N_L= T_L - \Delta$, where $T_L = \sum pr_i^* L$ is the symmetrization of $L$ and  $\Delta \in \Div( C^{p+1})$ is the union of the pairwise diagonals. Since $K_{C_{p+1}} = N_{K_C}$, the claim (*) reduces with some computation to the fact that if $D$ is a divisor on $C$, then $T_D + \Delta$ is nef on $C^{p+1}$ if and only if $\deg D \ge p(g-1)$. \end{proof}

\begin{remark} \label{Effective.Gonality} The Proposition guarantees that we can detect whether $K_C$ is $p$-very ample (or equivalently, whether $\gon(C) \ge p+2$) by the vanishing of $K_{p,1}(C, K_C;L)$ for any $L$ with 
\[ 
\deg(L) \ > \ (p^2 + 3p +3)(g-1). 
\]
But in any event $\gon(C) \le \tfrac{g+3}{2}$, and it follows (with some computation) that  the gonality of $C$ is detemined by the weight one syzygies of $C$ with respect to any line bundle of degree $\ge g^3$.  However we expect that such cubic bounds are far from optimal: one hopes that it is enough that the degree of $L$ grows linearly in $g$.  \qed \end{remark}

As suggested by Schreyer, we observe next that in some cases one can use the proof of Theorem \ref{Hilb.Poly.Thm}
 to get more information about the polynomial $P(d)$ appearing there. We focus on the most interesting case $B =K_C$, and content ourselves  with illustrating the method in a  simple instance. Specifically, suppose that $C$ carries finitely many pencils 
\[
\alpha_1, \ldots, \alpha_s \ \in \ W^1_{p+1}(C)
\]
of degree $p+1$, while  no other divisors of degree $p+1$ on $C$ move in non-trivial linear series.  We assume also that each $\alpha_i$ is (scheme-theoretically) an isolated point in $W^1_{p+1}(C)$ in the sense that the multiplication maps
\[
H^0(\alpha_i) \otimes H^0(K_C - \alpha_i) \lra H^0(K) \tag{*}
\]
are surjective for each $i$.\footnote{Recall that the Gieseker-Petri theorem  asserts that the hypothesis holds automatically for a general curve of genus $g = 2p$, in which case $s$ is given by a certain Catalan number.}
\begin{proposition} \label{Enumerative.Prop}
Under the hypotheses just stated, take $L_d = d \cdot x$ for some point $x \in C$. Then for $d \gg 0$,
\[  \dim K_{p,1}(C, K_C; L_d) \ = \ s \cdot d + \textnormal{ ( constant ) }. \]
\end{proposition}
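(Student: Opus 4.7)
The plan is to apply the proof of Theorem \ref{Hilb.Poly.Thm} with $B = K_C$ and then carry out an explicit calculation on each irreducible component of $\textnormal{Supp}(\FF_{K_C})$. As in that proof, for $d \gg 0$ one has
\[
K_{p,1}(C, K_C; L_d) \ = \ \HH{0}{C_{p+1}}{\FF_{K_C} \otimes N_{L_d}},
\]
where $\FF_{K_C} = \textnormal{coker}\,\ev_{K_C}$. By Riemann--Roch, a divisor $\xi \in C_{p+1}$ fails to impose independent conditions on $|K_C|$ exactly when $h^0(\xi) \ge 2$, and by our finiteness hypothesis this forces $\xi \in |\alpha_i|$ for some unique $i$. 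Hence $\textnormal{Supp}(\FF_{K_C}) = Z := |\alpha_1| \sqcup \cdots \sqcup |\alpha_s|$, a disjoint union of $s$ copies of $\PP^1$ sitting inside $C_{p+1}$.

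Next I would show that $\FF_{K_C}$ restricts to a line bundle $\mathcal{L}_i$ on each component $|\alpha_i| \cong \PP^1$. For any $\xi \in |\alpha_i|$, Riemann--Roch gives $h^0(K_C - \xi) = g - p$, so the image of $H^0(K_C) \to H^0(K_C \otimes \OO_\xi)$ has codimension $1$; hence $\FF_{K_C}$ has constant fibre dimension $1$ along $Z$. To upgrade this to an honest line bundle, I would invoke the hypothesis (*): via the base-point-free pencil trick, the surjectivity of (*) is equivalent to the vanishing of the tangent space $T_{\alpha_i} W^1_{p+1}(C)$, so $W^1_{p+1}(C)$ is reduced of the expected dimension $0$ at each $\alpha_i$, and pulling back along Abel--Jacobi forces $Z$ to be reduced as the scheme-theoretic support of $\FF_{K_C}$. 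Together with smoothness of $\PP^1$ and constant fibre dimension $1$, this yields that $\mathcal{L}_i$ is torsion-free of generic rank $1$, hence a line bundle.

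The degree of $N_{L_d}$ on each $|\alpha_i|$ can then be computed by identifying the universal divisor over $|\alpha_i| \cong \PP^1$ with the graph $\Gamma_i \cong C$ of the associated pencil map $\phi_i: C \to \PP^1$, which I assume base-point-free for notational simplicity (the general case only adjusts the constant term). Flat base change in the definition of $E_{L_d}$ gives $E_{L_d}|_{|\alpha_i|} \cong \phi_{i,*} L_d$, whence
\[
\deg N_{L_d}|_{|\alpha_i|} \ = \ \deg \det \phi_{i,*} L_d \ = \ \chi(L_d) - (p+1) \ = \ d - g - p.
\]
For $d \gg 0$ this is positive enough that $\mathcal{L}_i \otimes N_{L_d}|_{|\alpha_i|}$ is non-special on $\PP^1$, and Riemann--Roch on each summand gives
\[
\dim K_{p,1}(C, K_C; L_d) \ = \ \sum_{i=1}^s \bigl(d - g - p + 1 + \deg \mathcal{L}_i\bigr) \ = \ s \cdot d + \textnormal{(constant)}.
\]

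The main obstacle is the middle paragraph: extracting the precise leading coefficient $s$ requires that $\FF_{K_C}|_{|\alpha_i|}$ be a genuine line bundle, with no nilpotent contribution to the scheme structure of $Z$ and no torsion in $\FF_{K_C}$ along $Z$. This is exactly the role of the Gieseker--Petri-type hypothesis (*), which controls the infinitesimal behaviour of $W^1_{p+1}(C)$ at each $\alpha_i$; without it one would in general only obtain the weaker statement that $\dim K_{p,1}(C, K_C; L_d)$ grows linearly in $d$ with some nonnegative leading coefficient.
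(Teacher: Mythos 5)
Your proposal is correct and follows essentially the same route as the paper: both reduce to computing $h^0(C_{p+1}, \FF_{K_C}\otimes N_d)$, identify the support of $\FF_{K_C}$ with the $s$ copies of $\PP^1=|\alpha_i|$ on which $N_d$ has degree $d+O(1)$, and use the Petri-type hypothesis (*) to see that $\FF_{K_C}$ is a line bundle along them --- the paper doing this last step by quoting the identification $\ev_{K_C}=du$ and the computation $\mathrm{coker}\,du=\oplus_i\,\Omega^1_{|\alpha_i|}$ from ACGH, Chapter IV.4. Your middle paragraph (constant fibre rank one plus reducedness of the scheme-theoretic support) is a slightly less direct substitute for that citation, but it is the same idea and is at the level of detail of the paper's own sketch.
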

\noi We note that O'Dorney and Yang \cite{OY} have made some interesting computations of the dimensions of  $ K_{p,0}(C, K_C; L_d)$ on a general curve, including determining the leading coefficient of the resulting polynomial. \begin{proof}[Sketch of Proof of Proposition \ref{Enumerative.Prop}]
Note that each $\alpha_i$ determines a copy of $\PP^1 = \linser{\alpha_i}$ sitting in the symmetric product $C_{p+1}$,
and these are precisely the positive-dimensional fibres of the Abel-Jacobi map \[ u = u_{p+1} : C_{p+1} \lra \Jac^{p+1}(C).\] Now when $B = K_C$, the evaluation  \eqref{ev.B.equation} is identified with the coderivative $du$ of $u$, and by a well-known computation \cite[Chapt. IV.4]{ACGH}, the condition (*) implies that
\[   \textnormal{coker}  \, du \ = \ \oplus_{i=1}^s  \,\Omega^1_{\linser{\alpha_i}}. \]
 In particular, the sheaf $\FF_{K_C}$ appearing in \eqref{ThmCEqn} has rank one along each $\PP^1 = \linser{\alpha_i}$. On the other hand, if $L_d = d \cdot x$ then the divisor $N_d$ has degree $d + \textnormal{ (constant) }$ along $\linser{\alpha_i}$, so each of these  copies of $\PP^1$ contributes a term of the same shape to the Hilbert polynomial of $\FF_{K_C}$. \end{proof}

Finally, we make some remarks about what one might expect in higher dimensions. Let $X$ be a smooth projective variety of dimension $n$, and let $L_d = dA + E$ where $A$ is an ample and $E$ an arbitrary divisor on $X$. Given a line bundle $B$ on $X$, one would like to give geometric conditions on $B$ in order that 
\begin{equation} \label{Kp1.Van.Higher.Dim} K_{p,1}(X, B; L_d)  \ = \ 0  \ \ \text{for all } d \gg 0:  \end{equation}
as explained above and in \cite[Problem 7.2]{ASAV} this is the most interesting group from an asymptotic viewpoint. It is conceivable that it suffices to assume that $B$ is $p$-very ample in the sense that $H^0(B)$ imposes independent conditions on every subscheme $\xi \subseteq X$ of length $p+1$, but this seems out of reach. On the other hand, recall that $B$ is said to be $p$-\textit{jet very ample} if for every effective zero-cycle
\[  z \ = \ a_1 x_1 + \ldots + a_s x_s \]
of degree $p+1$ on $X$, the natural mapping
\[   \HH{0}{X}{B}  \lra \HH{0}{X}{B \otimes  \OO_X/  \frakm_1^{a_1} \cdot \ldots \cdot \frakm_s^{a_s} } \]
is surjective, where $\frakm_i  \subseteq \OO_X$ is the ideal sheaf of $x_i$.  When $\dim X = 1$ this is the same as $p$-very ample, but in higher dimensions the condition on jets is stronger. \begin{conjecture}
If $B$ is $p$-jet very ample, then \eqref{Kp1.Van.Higher.Dim} holds.
\end{conjecture}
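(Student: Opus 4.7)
The plan is to mimic the proof of Theorem \ref{Kp1(B).Theorem} as closely as possible, replacing the symmetric product $C_{p+1}$ by a higher-dimensional parameter space $H$ for the fat-point subschemes appearing in the definition of $p$-jet very ampleness. Concretely, I would look for a smooth irreducible projective variety $H$ together with a flat family $\Xi \subset X \times H$ of length-$(p+1)$ subschemes of $X$ that is universal for curvilinear fat points of the form $V(\frakm_1^{a_1}\cdots\frakm_s^{a_s})$ with $\sum a_i = p+1$. A natural first candidate is an appropriate component (or resolution) of the Hilbert scheme $X^{[p+1]}$; alternatively, one can try a bundle of principal parts fibered over the symmetric product $X^{(p+1)}$. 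Once such an $H$ is fixed, define $E_B = \pi_{H,*}\bigl( \pi_X^* B|_\Xi \bigr)$, which is a rank-$(p+1)$ vector bundle on $H$ by flatness of $\Xi$, and $N_L = \det E_L$.

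The first substantive step would be to establish the higher-dimensional analogue of Voisin's identifications from Lemma \ref{Voisin.Lemma}: namely, for $L$ positive enough,
\[ \HH{0}{H}{N_L} \, = \, \Lambda^{p+1} H^0(X,L), \]
and the homomorphism on global sections induced by the twisted evaluation
\[ H^0(X,B) \otimes N_L \, \lra \, E_B \otimes N_L \]
is identified with the Koszul differential, so that $\HH{0}{H}{E_B \otimes N_L} = Z_{p,1}(X, B; L)$. For $\dim X = 2$, analogues of the first identification on the Hilbert scheme of points are known from the work of Ellingsrud--G\"ottsche--Lehn and others; in general this requires a careful Hilbert-schematic computation on $H$ along the lines of Voisin's.

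Granting these identifications, the argument proceeds exactly as in the curve case. The hypothesis that $B$ is $p$-jet very ample is precisely the statement that the fibrewise evaluation
\[ \ev_B \, : \, H^0(X,B) \otimes \OO_H \, \lra \, E_B \]
is surjective at every point of $H$, hence surjective as a map of vector bundles. Letting $M_B$ denote its kernel, vanishing of $K_{p,1}(X, B; L_d)$ reduces to
\[ \HH{1}{H}{M_B \otimes N_{L_d}} \, = \, 0 \quad\text{for } d \gg 0, \]
which follows from Serre--Fujita vanishing once one shows (as in Lemma \ref{Serre.Vanishing.Lemma}) that on $H$ one can write $N_{L_d} = N_E + d \cdot S_A$ with $S_A$ ample, so that $N_{L_d}$ eventually dominates $K_H$ plus any fixed twist.

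The main obstacle is clearly the first step: producing a smooth, or at least sufficiently mild, parameter space $H$ carrying a flat family of the relevant fat-point subschemes on which both $E_B$ is locally free of rank $p+1$ and $N_L$ satisfies the Hilbert-schematic identifications above. In dimension $\ge 3$ the Hilbert scheme is badly singular and the locus of subschemes $V(\frakm_1^{a_1}\cdots\frakm_s^{a_s})$ with $\sum a_i = p+1$ stratifies into components of varying length, so picking the correct $H$ and verifying $\HH{0}{H}{N_L} = \Lambda^{p+1}H^0(L)$ are the places where the curve proof does not extend mechanically. Once the geometry of $H$ is understood, the rest of the argument should be essentially formal.
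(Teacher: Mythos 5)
This statement is posed in the paper as an open conjecture --- the authors offer no proof and only remark that the ideas of \cite{Yang} may be relevant --- so there is no argument of theirs to compare yours against; the question is whether your proposal actually closes the problem, and it does not. The central difficulty is not merely that the Hilbert scheme is singular in dimension $\ge 3$: your parameter space $H$ cannot exist as described. When $\dim X \ge 2$ the subschemes $V(\frakm_1^{a_1}\cdots\frakm_s^{a_s})$ with $\sum a_i = p+1$ do \emph{not} have length $p+1$ --- already $\OO_X/\frakm_x^2$ on a surface has length $3$ while the cycle $2x$ has degree $2$ --- and they are not curvilinear. So there is no flat family of ``length-$(p+1)$ subschemes'' whose fibres realize these fat points, the lengths jump as the points of the cycle collide, and $\pi_{H,*}(\pi_X^*B|_\Xi)$ cannot be locally free of rank $p+1$ over any space dominating $X^{(p+1)}$. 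This is exactly why $p$-jet very ampleness is strictly stronger than $p$-very ampleness in higher dimensions, and it is the obstruction that forces a genuinely new construction rather than a transplant of the curve argument.

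If you instead retreat to the honest Hilbert scheme $X^{[p+1]}$ of length-$(p+1)$ subschemes (which for surfaces is smooth and does carry a rank-$(p+1)$ tautological bundle $E_B$ with $\det E_L$ computing $\Lambda^{p+1}H^0(L)$, by Ellingsrud--G\"ottsche--Lehn and Voisin), then surjectivity of $\ev_B\colon H^0(B)\otimes\OO_{X^{[p+1]}}\to E_B$ at every point is precisely the statement that $B$ is $p$-\emph{very} ample --- the hypothesis the paper explicitly says ``seems out of reach'' --- not $p$-jet very ampleness. So as written your strategy either uses a parameter space that does not carry the required bundle, or proves a theorem under the wrong (and apparently inaccessible) hypothesis. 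A successful attack has to exploit the jet condition through a different geometric realization --- for instance working on Cartesian products $X^{p+1}$ with ideal sheaves of diagonals and an induction on $p$, where the non-reduced structure of the fat points can be encoded without demanding a locally free pushforward --- rather than through a tautological bundle on a Hilbert-type space. Everything after your ``first substantive step'' is indeed formal, but the first step is where the entire content of the conjecture lives, and your proposal leaves it open.
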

\noi It is very possible that  the ideas of \cite{Yang} will be helpful for this.

 %
 %
 %
 %

 \end{document}